\numberwithin{equation}{section}
\theoremstyle{plain}
\newtheorem{prop}{Proposition}[section]
\newtheorem{lem}[prop]{Lemma}
\newtheorem{thm}[prop]{Theorem}
\theoremstyle{definition}
\theoremstyle{remark}
\DeclareMathOperator{\tr}{tr}
\newcommand\N{\mathbb{N}}
\newcommand\R{\mathbb{R}}
\newcommand\Z{\mathbb{Z}}
\newcommand\T{\mathbb{T}}
\begin{document}
\title[]{Effective equidistribution of periodic orbits for subshifts of finite type}

\author{Shirali\@ Kadyrov}

\address{Department of Mathematics,
Nazarbayev University,
Astana, Kazakhstan}

\email{shirali.kadyrov@nu.edu.kz}
\keywords{Closed orbits, maximal entropy, symbolic dynamics, equidistribution}

\begin{abstract}
We study equidistribution of certain subsets of periodic orbits for subshifts of finite type. Our results solely rely on the growth of these subsets. As a consequence, effective equidistribution results are obtained for both hyperbolic diffeomorphisms and expanding maps on compact manifolds.
\end{abstract}

\maketitle

\section{Introduction}\label{sec:intro}

For a given $s \in \N$ and an $s \times s$ transition matrix $A$ with entries zero or one we let $(\Sigma_A^+,\sigma)$ denote the one-sided subshift of finite type where $\Sigma_A^+$ is the symbolic space given by
$$\Sigma_A^+=\{x=(x_n)_{n \ge 0} \in \prod_{n=0}^\infty \{1,2,\dots,s\}: A(x_n,x_{n+1})=1, \forall n \in \N\},$$
and $\sigma:\Sigma_A^+ \to \Sigma_A^+$ is the shift map by $(\sigma(x))_{n}=x_{n+1}.$
For a given $\theta \in (0,1)$ define a metric $d_\theta$ on $\Sigma_A^+$ by $d_\theta(x,y)=\theta^{t(x,y)} \text{ where } t(x,y)=\max\{n \ge 0: x_i=y_i, 0\le i<n \}.$ We similarly define two-sided subshift of finite type $(\Sigma_A,\sigma)$ where 
$$\Sigma_A=\{x=(x_n)_{n \ge 0} \in \prod_{n=-\infty}^\infty \{1,2,\dots,s\}: A(x_n,x_{n+1})=1, \forall n \in \Z\}$$ and the metric is given by $d_\theta(x,y)=\theta^{t(x,y)} \text{ where } t(x,y)=\max\{n \ge 0: x_i=y_i, |i|<n \}.$ Also, for any continuous function $g$ on $\Sigma_A^+$ we let 
$$|g|_\theta=\sup_{n \ge 0} \left\{\frac{|g(x)-g(y)|}{\theta^n}: x_i=y_i, 0 \le i\le n\right\}.$$ 
We similarly define $|\cdot|_\theta$ on $\Sigma_A$ with $0 \le i \le n$ replaced by $|i| \le n.$
In particular, $|g|_\theta <\infty$ implies that $g$ is a Lipschitz function with the least Lipschitz constant $|g|_\theta.$ Consider a norm $\|\cdot\|_\theta=|\cdot |_\theta+|\cdot|_\infty$ where $|\cdot|_\infty$ is the supremum norm and let $\mathcal F_\theta^+$ denote the space of all continuous functions $f$ on $\Sigma_A^+$ with $\|f\|_\theta<\infty.$ Analogously we define $\mathcal F_\theta$ on $\Sigma_A$. For both $(\Sigma_A^+,\sigma)$ and $(\Sigma_A,\sigma)$ we let $h(\sigma)$ denote the topological entropy and $m$ be the measure of maximal entropy so that $h(\sigma)=h_m(\sigma).$ Let $\xi=\{C_1, C_2,\dots,C_s\}$ denote the generating partition of $\Sigma_A^+$ (or $\Sigma_A$), where $C_i=\{x : x_0=i\}.$ To simplify the notation we let $\xi_{\ell}^n:=\bigvee_{i=\ell}^{n} \sigma^{-i} \xi$. In \cite{Ka14} we studied the effective uniqueness of $m$, the measure of maximal entropy. By effective uniqueness we mean a statement that gives how close a given measure to the measure of maximal entropy if its metric entropy is close to maximal entropy. See results from \cite{Polo, Ruh13} similar to  \cite{Ka14}. In this paper we obtain the following improvement of \cite[Theorem~1.1]{Ka14}, which will lead to effective equidistribution statements. The matrix $A$ is said to be \emph{irreducible} if  for each pair $(i,j)$ there exists $n\ge 1$ such that $A^n(i,j)>0$. We say that $A$ is \emph{aperiodic} if $A(i,i)=1$ for all $i=1,2,\dots,s$.

\begin{thm}\label{thm:main}
Assume that $A$ is irreducible and aperiodic. Then, there exists a constant $c>0$ such that for any $N \in \N \cup \{\infty\}$ and $\sigma$-invariant probability measure $\mu$ on $\Sigma_A^+$ and any Lipschitz function $f$ we have
\begin{equation}\label{eqn:main}
\left| \int f d\mu - \int f d  m\right| \le  c \|f\|_\theta \left( \theta^{\frac{N}2}+2\sqrt{2}\left(h(\sigma)-\frac1N H_\mu( \xi_0^{N-1})\right)^{\frac12}\right),
\end{equation}
where $m$ is the measure of maximal entropy on $\Sigma_A^+.$ Moreover, the same result holds for two-sided subshift $(\Sigma_A,\sigma)$ with the exponent $N/2$ of $\theta$ replaced by $N/4$.
\end{thm}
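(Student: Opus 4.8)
The plan is to bound the difference $\left|\int f\,d\mu - \int f\,dm\right|$ by comparing both integrals to a ``coarse-grained'' average that depends only on the distribution of $\mu$ on the partition $\xi_0^{N-1}$, and then control the discrepancy of that distribution from the one induced by $m$ via an entropy-gap (Pinsker-type) estimate. First I would reduce to the case of a function $f$ that is $\xi_0^{N-1}$-measurable, i.e.\ constant on each cylinder of length $N$. For a general Lipschitz $f$, if $\bar f_N$ denotes the conditional expectation $E[f\mid\xi_0^{N-1}]$ (or, concretely, the function whose value on a cylinder is $f$ evaluated at some chosen point of that cylinder), then $|f-\bar f_N|_\infty \le |f|_\theta\,\theta^{N}$ by definition of the $\theta$-seminorm, so replacing $f$ by $\bar f_N$ in both integrals costs at most $2|f|_\theta\theta^N$, which is absorbed into the $\theta^{N/2}$ term. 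This is why only the seminorm times $\theta^{N/2}$ appears and not something sharper — one deliberately throws away sharpness here to keep the argument uniform in $N$.

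Next, for the $\xi_0^{N-1}$-measurable function $\bar f_N$, both $\int \bar f_N\,d\mu$ and $\int \bar f_N\,dm$ are finite sums $\sum_{C}\bar f_N(C)\,\nu(C)$ over the cylinders $C\in\xi_0^{N-1}$, with $\nu=\mu$ or $m$. Hence
$$
\left|\int \bar f_N\,d\mu - \int \bar f_N\,dm\right| \le |f|_\infty \sum_{C\in\xi_0^{N-1}} |\mu(C)-m(C)| = |f|_\infty \,\|\mu_N - m_N\|_{\mathrm{TV}},
$$
where $\mu_N,m_N$ are the probability vectors $(\nu(C))_{C}$. By Pinsker's inequality, $\|\mu_N-m_N\|_{\mathrm{TV}} \le \sqrt{2\,\mathrm{KL}(\mu_N\|m_N)}$, so the key is to bound the relative entropy $\mathrm{KL}(\mu_N\|m_N) = \sum_C \mu(C)\log\frac{\mu(C)}{m(C)}$ by $N\big(h(\sigma)-\tfrac1N H_\mu(\xi_0^{N-1})\big)$. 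Here one uses the known structure of the measure of maximal entropy for an irreducible aperiodic SFT: $m$ is Markov (Parry measure), so $-\log m(C)$ for a length-$N$ cylinder is, up to a bounded additive error, exactly $N h(\sigma)$. Concretely, $m(C) = p_{x_0}\prod_{i=0}^{N-2} P(x_i,x_{i+1})$ with $P$ the Parry stochastic matrix satisfying $-\log P(i,j)$ summing along orbits to $h(\sigma)$ per step, so $-\sum_C\mu(C)\log m(C) = N h(\sigma) + O(1)$ with the $O(1)$ coming only from the initial-distribution factor $p_{x_0}$. Then $\mathrm{KL}(\mu_N\|m_N) = -H_\mu(\xi_0^{N-1}) + (Nh(\sigma)+O(1)) = N\big(h(\sigma)-\tfrac1N H_\mu(\xi_0^{N-1})\big) + O(1)$. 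Chaining Pinsker gives a term $\le c\sqrt{N}\big(h(\sigma)-\tfrac1N H_\mu(\xi_0^{N-1})\big)^{1/2} + \text{(bounded)}$ — but to match the stated bound, which has no stray $\sqrt N$, one must instead apply the coarse-graining at the level of the \emph{per-symbol} estimate: split $\xi_0^{N-1}$ into roughly $N$ translates of a single-step refinement and use subadditivity/telescoping of $H_\mu$ so that the relative entropy per coordinate is what enters, producing the clean factor $\big(h(\sigma)-\tfrac1N H_\mu(\xi_0^{N-1})\big)^{1/2}$ with the explicit constant $2\sqrt2$. This is exactly the refinement of \cite[Theorem~1.1]{Ka14} being claimed.

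For the two-sided shift $(\Sigma_A,\sigma)$, the only change is that a $\theta$-Lipschitz function on $\Sigma_A$ that we wish to approximate by a $\xi_0^{N-1}$-measurable (i.e.\ future-$N$-coordinates-measurable) function has approximation error governed by how much $f$ depends on the past coordinates $x_{-1},x_{-2},\dots$ as well. The standard trick is to pass through the natural projection $\Sigma_A\to\Sigma_A^+$ after first replacing $f$ by a cohomologous function depending on one-sided coordinates; the cost of this ``straightening'' is an error of order $\theta^{N/2}$ rather than $\theta^N$ (one splits the $N$ coordinates symmetrically, $N/2$ to absorb the past and $N/2$ the future), which is why the exponent degrades from $N/2$ to $N/4$ after the square-root in Pinsker is taken into account — or rather, the $\theta^{N/2}$ approximation term itself becomes $\theta^{N/4}$ because the symmetric window of half-length $N/2$ around coordinate $0$ only controls $t(x,y)\ge N/2$, hence $d_\theta\le\theta^{N/2}$ translates to an error $|f|_\theta\theta^{N/2}$... — I would reconcile the precise exponent bookkeeping at the end. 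The main obstacle, and the genuinely new input over a soft argument, is the second point: getting the \emph{constant} right and avoiding a spurious $\sqrt N$, which forces the per-coordinate (rather than per-cylinder) application of Pinsker together with a careful use of the Parry-measure Gibbs property $|{-\log m(C)} - N h(\sigma)| \le \mathrm{const}$ uniformly over cylinders $C$ of every length $N$.
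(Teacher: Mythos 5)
Your reduction to cylinder functions and the computation $\mathrm{KL}(\mu_N\|m_N)=N\bigl(h(\sigma)-\tfrac1N H_\mu(\xi_0^{N-1})\bigr)+O(1)$ are fine as far as they go, but they run straight into the two obstructions you yourself notice, and the proposal never actually removes them. First, applying Pinsker to the full distributions on $\xi_0^{N-1}$ inevitably produces $\sqrt{2N(\,\cdot\,)+O(1)}$: the stray $\sqrt N$ is not a bookkeeping nuisance but a structural failure of the coarse-grained route, since the total variation between the two cylinder distributions genuinely does not admit a bound by the per-coordinate entropy gap alone (any chain-rule/``translates of a single-step refinement'' decomposition of the KL, followed by Pinsker coordinatewise and summation, reproduces the same $\sqrt N$ by Cauchy--Schwarz). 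Second, the $O(1)$ coming from the Gibbs/initial-distribution factor sits \emph{inside} the square root and does not vanish, so even for $N$ large the bound would not tend to zero; one needs the exact identity, not a bounded-error Gibbs estimate. The sentence ``split $\xi_0^{N-1}$ into roughly $N$ translates \dots use subadditivity/telescoping'' is a gesture at the missing idea, not an argument: as stated it does not produce the clean factor $\bigl(h(\sigma)-\tfrac1N H_\mu(\xi_0^{N-1})\bigr)^{1/2}$.

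The paper's mechanism is different and is exactly what supplies the two missing ingredients. It never compares the measures on the full partition $\xi_0^{N-1}$. Instead it telescopes $\int f\,d\mu-\int f\,dm=\sum_{n\ge0}\bigl(\int f_n\,d\mu-\int f_{n+1}\,d\mu\bigr)$ with $f_n=\mathcal L^nf$, where $\mathcal L$ is the transfer operator; the spectral gap $\|\mathcal L^nf\|_\theta\le C\rho^n\|f\|_\theta$ makes the sum converge with a constant independent of $N$, which is what kills the $\sqrt N$. Each summand is controlled by a \emph{single-coordinate} comparison: $(\mathcal Lf)\circ\sigma=E_m(f\mid\xi_1^\infty)$, and one compares $E_m(f_n\mid\xi_1^\infty)$ with $E_\mu(f_n\mid\xi_1^N)$ using Pinsker on the $s$-dimensional conditional distributions over $\xi$, followed by Cauchy--Schwarz, giving the factor $\bigl(h_m(\sigma)-H_\mu(\xi\mid\xi_1^N)\bigr)^{1/2}$ plus the $\theta^{N+1}$ approximation error. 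The exactness you are missing comes from the Parry-measure computation $I_m(\xi\mid\xi_1^\infty)=\log\lambda+g\circ\sigma-g$ (a coboundary), so that $\int I_m(\xi\mid\xi_1^\infty)\,d\mu=h_m(\sigma)$ for \emph{every} invariant $\mu$, with no $O(1)$. Finally the passage from $H_\mu(\xi\mid\xi_1^N)$ to $\tfrac1N H_\mu(\xi_0^{N-1})$ uses monotonicity of $n\mapsto H_\mu(\xi\mid\xi_1^n)$ and a simple averaging lemma, at the cost of the factor $2$ and of replacing $N$ by $N/2$ (whence $\theta^{N/2}$ and the constant $2\sqrt2$). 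Your two-sided discussion also stops short: the paper's route is the standard cohomology reduction to a function of future coordinates in $\mathcal F_{\theta^{1/2}}$, and the exponent $N/4$ is simply $(\theta^{1/2})^{N/2}$; no symmetric-window argument is needed. Without the transfer-operator telescoping (or some equivalent geometric-decay mechanism) and the exact coboundary identity for $I_m$, the bound in the theorem is not reachable by the proposed approach.
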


This generalizes and improves \cite[Theorem~4.1.2 and Theorem~4.1.3]{Polo}. 

We now want to discuss how Theorem~\ref{thm:main} can be applied to show the effective equidistribution of periodic orbits. In other words, we want to obtain a rate of convergence of the distribution of periodic orbits. For any $n \in \N$ we let $\text{Fix}_n$ denote the set of periodic points of period $n$, namely
$${\rm Fix}_n=\{x \in \Sigma_A^+ : \sigma^n(x)=x\}.$$
By abuse of notation we let ${\rm Fix}_n$ also denote the periodic orbits of $\Sigma_A$ of order $n$. For any nonempty finite set $I$ in $\Sigma_A^+$  (or $\Sigma_A$) we let $\mu_I$ denote the uniform probability measure supported on $I$, namely, 
$$\mu_I =\frac1{|I|} \sum_{x \in I} \delta_x.$$ 
Clearly each nonempty element of $ \xi_0^{n-1}$ contains exactly one element from ${\rm Fix}_n$. More precisely, for any $x \in {\rm Fix}_n$ we have  
$$x \in P(x,n):=C_{x_0} \cap \sigma^{-1} C_{x_1} \cap \cdots \cap \sigma^{-(n-1)} C_{x_{n-1}} \in \xi_0^{n-1}.$$
Thus, for any $P \in \xi_0^{n-1}$ we have $\mu_I(P)=\frac{1}{|I|}$ if $P=P(x,n)$ for some $x \in I$ and it is zero otherwise. Thus,
$$H_{\mu_I}(\xi_0^{n-1})= \log |I|.$$
Consequently, applying Theorem~\ref{thm:main} we see that for any Lipschitz $f$ it holds
$$\left| \int f d\mu - \int f d  m\right| \le c \|f\|_\theta \left( \theta^{\frac{n}4}+2\sqrt{2}\left(h_{  m}(\sigma)-\frac1n \log |I|  \right)^{\frac12}\right),$$
this proves the following.

\begin{thm}\label{thm:mainPO}
Fix $n \in \N$ and let $I$ be a nonempty invariant subset of ${\rm Fix}_n$. Then, there exists $c>0$ such that for any Lipschitz function $f$ we have
$$|\int f\, dm - \int f\, d\mu_{I}| \le c \|f\|_\theta \left( \theta^{\frac{n}4}+2\sqrt{2}\sqrt{h(\sigma) - \frac{1}n\log |I|}\right).$$
\end{thm}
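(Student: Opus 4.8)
The plan is to obtain Theorem~\ref{thm:mainPO} as an immediate specialization of Theorem~\ref{thm:main}, applied with $\mu=\mu_I$ and $N=n$; the entire task then reduces to two routine verifications, namely that $\mu_I$ is $\sigma$-invariant and that $H_{\mu_I}(\xi_0^{n-1})=\log|I|$, both of which are already flagged in the discussion preceding the statement.

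\textbf{Invariance of $\mu_I$.} Since $I$ is by hypothesis an invariant subset of ${\rm Fix}_n$, the shift $\sigma$ restricts to a bijection $I\to I$, so pushing forward $\mu_I=\frac1{|I|}\sum_{x\in I}\delta_x$ merely permutes the Dirac masses and $\sigma_\ast\mu_I=\mu_I$. This is the only place where the invariance of $I$ is used, and it is essential because Theorem~\ref{thm:main} applies only to $\sigma$-invariant probability measures.

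\textbf{The entropy computation.} The atom of $\xi_0^{n-1}$ containing a point $y$ is determined exactly by the word $(y_0,\dots,y_{n-1})$, and for $y\in{\rm Fix}_n$ this word determines $y$ completely by $n$-periodicity, so $x\mapsto P(x,n)$ is injective on $I$. Hence $\mu_I$ is carried by precisely $|I|$ atoms of $\xi_0^{n-1}$, each of mass $1/|I|$, which gives $H_{\mu_I}(\xi_0^{n-1})=-\sum(1/|I|)\log(1/|I|)=\log|I|$.

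\textbf{Conclusion.} Substituting $N=n$ and $\mu=\mu_I$ into \eqref{eqn:main}, the two-sided case of Theorem~\ref{thm:main} yields exactly the asserted bound with exponent $n/4$; for ${\rm Fix}_n\subseteq\Sigma_A^+$ the one-sided case gives the stronger exponent $n/2$, and since $\theta\in(0,1)$ we have $\theta^{n/2}\le\theta^{n/4}$, so the stated inequality holds verbatim in both settings (note also that a Lipschitz $f$ on a compact $\Sigma_A$ or $\Sigma_A^+$ has $\|f\|_\theta<\infty$, so the right-hand side is finite). I do not expect any genuine obstacle here: all the analytic content lies in Theorem~\ref{thm:main}, and the only points meriting a second glance are the injectivity of $x\mapsto P(x,n)$ on periodic points, which is what secures $H_{\mu_I}(\xi_0^{n-1})=\log|I|$ rather than something smaller, and the harmless weakening of the exponent from $n/2$ to $n/4$ needed to phrase the result uniformly for both shifts.
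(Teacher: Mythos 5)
Your proposal is correct and follows essentially the same route as the paper: specialize Theorem~\ref{thm:main} to $\mu=\mu_I$ with $N=n$, using that $\mu_I$ is $\sigma$-invariant and that each point of $I$ sits in its own atom of $\xi_0^{n-1}$, giving $H_{\mu_I}(\xi_0^{n-1})=\log|I|$. Your extra remarks on the injectivity of $x\mapsto P(x,n)$ and on weakening the exponent from $n/2$ to $n/4$ to cover the two-sided case match the paper's (implicit) reasoning.
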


As an immediate consequence we get

\begin{thm}\label{thm:seqI}
If $\{I_n\}$ is a sequence of invariant sets with $I_n \subset {\rm Fix}_n$ and $\varphi(n):=h(\sigma)-\frac1n \log|I_n| \to 0$ as $n \to \infty$ then there exists a constant $c >0$ such that for any $n \in \N$ and Lipschitz function $f$ we have
$$|\int f\, dm - \int f\, d\mu_{I_n}| \le c \|f\|_\theta \left( \theta^{\frac{n}4}+2\sqrt{2}\sqrt{\varphi(n)}\right).$$
\end{thm}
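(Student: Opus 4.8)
The plan is to deduce this directly from Theorem~\ref{thm:mainPO}, the only point requiring care being that the constant $c$ can be chosen independently of $n$. First I would record that for each $n$ the set $I_n$, being a $\sigma$-invariant subset of the finite set ${\rm Fix}_n$, is permuted by $\sigma$; hence the uniform measure $\mu_{I_n}$ is a $\sigma$-invariant probability measure, and the computation already carried out in the excerpt gives $H_{\mu_{I_n}}(\xi_0^{n-1}) = \log|I_n|$, so that Theorem~\ref{thm:mainPO} applies to $I = I_n$.

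Next I would revisit how Theorem~\ref{thm:mainPO} is obtained from Theorem~\ref{thm:main}: one takes $\mu = \mu_I$ and chooses the free parameter $N$ equal to $n$. Since the constant $c$ furnished by Theorem~\ref{thm:main} is, by the statement of that theorem, independent of $N$, of the $\sigma$-invariant measure, and of the Lipschitz function, the same $c$ serves in Theorem~\ref{thm:mainPO} for every value of $n$ at once. Applying this with $I = I_n$ and an arbitrary Lipschitz $f$ yields
$$\left|\int f\,dm - \int f\,d\mu_{I_n}\right| \le c\,\|f\|_\theta\left(\theta^{n/4} + 2\sqrt{2}\,\sqrt{h(\sigma) - \tfrac1n\log|I_n|}\,\right).$$

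Finally I would substitute $\varphi(n) = h(\sigma) - \frac1n\log|I_n|$ into this inequality to obtain the claimed bound for all $n \in \N$ and all Lipschitz $f$. I would also note that the hypothesis $\varphi(n) \to 0$ plays no role in the inequality itself; it is used only to conclude that the right-hand side tends to $0$, i.e.\ that $\mu_{I_n} \to m$ in the weak-$*$ topology with the stated rate. There is no genuine obstacle: the whole content is the uniformity of $c$ in $n$, which is already guaranteed by Theorem~\ref{thm:main}.
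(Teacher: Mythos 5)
Your proposal is correct and follows the paper's own route: the paper obtains Theorem~\ref{thm:seqI} as an immediate consequence of Theorem~\ref{thm:mainPO}, which is itself deduced by applying Theorem~\ref{thm:main} with $\mu=\mu_I$, $N=n$, and the computation $H_{\mu_I}(\xi_0^{n-1})=\log|I|$, exactly as you describe. Your observations that the constant $c$ from Theorem~\ref{thm:main} is uniform in $N$, $\mu$, and $f$ (so the same $c$ works for all $n$), and that the hypothesis $\varphi(n)\to 0$ is not needed for the inequality itself, are both accurate.
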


A similar result was studied in a different context in \cite{AE15}. We note that our methods are completely different from that of \cite{AE15}. It is well known (see e.g. \cite[Sublemma 4.10.1]{PY98}) that $|{\rm Fix}_n| \sim e^{h(\sigma) n},$ that is, $\lim_{n \to \infty}|{\rm Fix}_n|/(e^{h(\sigma) n}) = 1.$ In fact, it is easy to see that $|{\rm Fix}_n|=\tr(A^n)=\lambda_1^n+\cdots+\lambda_s^n$, where $\lambda_i$'s are eigenvalues of $A$ with $\lambda_1 > |\lambda_i|$ for all $i \ne 1$. Thus, there exists $\delta>0$ such that 
$|{\rm Fix}_n|=\lambda_1^n(1+O(e^{-\delta n})),$ and since $h(\sigma)=\log \lambda_1$ we deduce that
$$h(\sigma)-\frac1n\log|{\rm Fix}_n| = O(e^{-\delta n}).$$ 
Hence, as a particular case of Theorem~\ref{thm:seqI} we obtain the effective equidistribution of periodic orbits:

\begin{thm} There exist constants $c,\delta >0$ such that for any Lipschitz function $f$ and $n \in \N$ we have
$$\left|\int f\, dm - \int f\, d\mu_{{\rm Fix}_n}\right| \le c \|f\|_\theta \, e^{-\delta n}.$$
\end{thm}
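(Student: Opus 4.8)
The plan is to obtain this statement as the special case $I_n = {\rm Fix}_n$ of Theorem~\ref{thm:seqI}, turning the two summands in that bound into a single exponential. Two things must be checked: that each ${\rm Fix}_n$ is $\sigma$-invariant, and that $\varphi(n) := h(\sigma)-\frac1n\log|{\rm Fix}_n|$ not only tends to $0$ but does so exponentially fast. Invariance is immediate: if $\sigma^n x = x$ then $\sigma^n(\sigma x)=\sigma(\sigma^n x)=\sigma x$, so $\sigma({\rm Fix}_n)\subseteq{\rm Fix}_n$, and ${\rm Fix}_n\neq\emptyset$ since e.g. any constant sequence (using $A(i,i)=1$) is fixed.

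Next I would record the eigenvalue count $|{\rm Fix}_n| = \tr(A^n)$: each $x\in{\rm Fix}_n$ corresponds bijectively to a closed admissible path of length $n$ in the graph of $A$, and the number of such paths based at $i$ is $A^n(i,i)$; summing over $i$ gives $\tr(A^n)=\sum_{i=1}^s\lambda_i^n$, where $\lambda_1,\dots,\lambda_s$ is the spectrum with algebraic multiplicity. Here the hypotheses on $A$ enter decisively: irreducibility together with aperiodicity in the sense $A(i,i)=1$ for all $i$ makes $A$ primitive, so by Perron--Frobenius the spectral radius $\lambda_1>0$ is a simple eigenvalue with $|\lambda_i|<\lambda_1$ for every $i\neq 1$; and it is classical (count admissible words) that $h(\sigma)=\log\lambda_1$ for the subshift attached to an irreducible $A$. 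Setting $\rho := \max_{i\neq 1}|\lambda_i| < \lambda_1$, one gets $\bigl|\,|{\rm Fix}_n|-\lambda_1^n\bigr|\le (s-1)\rho^n$, hence $\log|{\rm Fix}_n| = n\log\lambda_1 + \log\bigl(1+O((\rho/\lambda_1)^n)\bigr)$ and therefore $\varphi(n) = O\bigl(\tfrac1n(\rho/\lambda_1)^n\bigr)$. Fixing any $\delta_0$ with $0<\delta_0<\log(\lambda_1/\rho)$ (any $\delta_0>0$ works if $\rho=0$), we obtain $\varphi(n)\le C_0 e^{-\delta_0 n}$ for a suitable constant $C_0$ and all $n$.

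Now the hypothesis $\varphi(n)\to 0$ of Theorem~\ref{thm:seqI} is verified for $I_n={\rm Fix}_n$, and that theorem gives, for every Lipschitz $f$ and every $n\in\N$,
$$\left|\int f\,dm - \int f\,d\mu_{{\rm Fix}_n}\right| \le c'\,\|f\|_\theta\bigl(\theta^{n/4} + 2\sqrt{2}\,\sqrt{\varphi(n)}\bigr) \le c'\,\|f\|_\theta\bigl(e^{-\frac n4\log(1/\theta)} + 2\sqrt{2C_0}\,e^{-\delta_0 n/2}\bigr).$$
Choosing $\delta := \min\{\tfrac14\log(1/\theta),\ \tfrac12\delta_0\}>0$ and absorbing the numerical constants into a new constant $c$ yields the asserted bound $c\,\|f\|_\theta\,e^{-\delta n}$.

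As for obstacles: there is essentially none of substance, since the statement is a quantitative repackaging of Theorem~\ref{thm:seqI}. The only points deserving care are the justification that the hypotheses on $A$ force a genuine spectral gap --- i.e.\ that no eigenvalue other than $\lambda_1$ lies on the circle $|\lambda|=\lambda_1$, which is precisely where aperiodicity rather than mere irreducibility is used --- and the standard identity $h(\sigma)=\log\lambda_1$, which I would cite rather than reprove.
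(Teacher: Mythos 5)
Your proposal is correct and follows essentially the same route as the paper: specialize Theorem~\ref{thm:seqI} to $I_n={\rm Fix}_n$, use $|{\rm Fix}_n|=\tr(A^n)=\sum_i\lambda_i^n$ with the Perron--Frobenius spectral gap and $h(\sigma)=\log\lambda_1$ to get $\varphi(n)=O(e^{-\delta_0 n})$, and absorb both error terms into a single exponential. Your write-up merely makes explicit some details (invariance of ${\rm Fix}_n$, primitivity of $A$, the choice of $\delta$) that the paper leaves implicit.
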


It is well known that repellers and Axiom A diffeomorphisms admit Markov partitions. We refer to \cite{B1, B2, S1, S2, Ka14} for more details. Consequently, we can realize a repeller $(J, T)$ and an Axiom A diffeomorphism $(\Omega,T)$ as a factor of a subshift of finite type. By abuse of notation let ${\rm Fix}_n$ denote the set of closed orbits $x$ with $T^nx=x.$ Using Theorem~\ref{thm:main} and standard arguments we can obtain the effective equidistribution statements of closed orbits.  

\begin{thm}\label{thm:RA} Let $(J,T)$ be a mixing repeller or $(\Omega(T),T)$ be a mixing Axiom A diffeomorphism. Let $\{I_n\}$ be a sequence of invariant sets with $I_n \subset {\rm Fix}_n$ and $\varphi(n):=h(T)-\frac1n \log|I_n| \to 0$ as $n \to \infty$ where $h(T)$ is the topological entropy. Then, for any Lipschitz function $f$ there exist constants $C(f)>0$ and $\theta \in (0,1)$ such that
$$|\int f\, dm - \int f\, d\mu_{I_n}| \le C(f) \left( \theta^n+2\sqrt{2}\sqrt{\varphi(n)}\right),$$
for any $n \in \N$. Moreover, there exists a constant $\delta >0$ such that for any Lipschitz function $f$ we have
$$\left|\int f\, dm - \int f\, d\mu_{{\rm Fix}_n}\right| \le C(f) e^{-\delta n}.$$
\end{thm}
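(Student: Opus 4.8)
The plan is to transfer Theorem~\ref{thm:seqI}, together with the mechanism of its proof via Theorem~\ref{thm:main}, through a Markov coding. Write $(\Lambda,T)$ for the mixing repeller $(J,T)$, respectively for the mixing basic set $(\Omega(T),T)$. By the theory of Markov partitions (see the references cited above) there is a subshift of finite type --- one-sided $(\Sigma_A^+,\sigma)$ in the repeller case, two-sided $(\Sigma_A,\sigma)$ in the Axiom~A case, which I denote uniformly by $(\Sigma_A^{(+)},\sigma)$ --- and a H\"older surjection $\pi\colon\Sigma_A^{(+)}\to\Lambda$ with $\pi\circ\sigma=T\circ\pi$ such that: (i) $\pi$ is uniformly finite-to-one; (ii) $\pi$ is injective over $\Lambda\setminus\mathcal N$, where $\mathcal N:=\bigcup_k T^k\bigl(\bigcup_i\partial R_i\bigr)$ is the boundary set of the partition, and $|{\rm Fix}_n(T)\cap\mathcal N|=O(e^{h'n})$ for some $h'<h(T)$; (iii) $\pi$ pushes the measure of maximal entropy $\tilde m$ of $\sigma$ forward to the measure of maximal entropy $m$ of $T$, and $h(\sigma)=h(T)$; (iv) since $\Lambda$ is mixing, the coding subshift is topologically mixing, so (after a standard recoding if necessary to meet the hypotheses literally) $A$ is irreducible and aperiodic and Theorems~\ref{thm:main}--\ref{thm:seqI} are available.

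First I would lift the data upstairs. For Lipschitz $f$ on $\Lambda$, the composition $f\circ\pi$ lies in $\mathcal F_\theta^+$ (resp. $\mathcal F_\theta$) for some $\theta=\theta(T)\in(0,1)$ governed by the H\"older exponent of $\pi$ (itself controlled by the expansion/contraction rates of $T$), with $\|f\circ\pi\|_\theta\le C(f)$. Given the sequence $\{I_n\}$, put $\tilde I_n:=\pi^{-1}(I_n)\cap{\rm Fix}_n(\sigma)$; this is a $\sigma$-invariant subset of ${\rm Fix}_n(\sigma)$. If $x\in I_n$ lies outside $\mathcal N$, then $x$ has a unique $\pi$-preimage $\tilde x$, and from $\pi(\sigma^n\tilde x)=T^n\pi(\tilde x)=T^nx=x$ together with uniqueness we get $\sigma^n\tilde x=\tilde x$, so $\tilde x\in\tilde I_n$; multiplicities exceeding one, and any point of $I_n$ not covered by $\pi$, can occur only over $I_n\cap\mathcal N$. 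Since $|I_n\cap\mathcal N|\le|{\rm Fix}_n(T)\cap\mathcal N|=O(e^{h'n})$, while $\varphi(n)\to0$ forces $|I_n|\ge e^{(h(T)-o(1))n}$ with $h'<h(T)$, we obtain $\bigl|\,|\tilde I_n|-|I_n|\,\bigr|=O(e^{h'n})$, hence $|\tilde I_n|=|I_n|\bigl(1+O(e^{-\eta n})\bigr)$ for some $\eta>0$; in particular $\tilde\varphi(n):=h(\sigma)-\tfrac1n\log|\tilde I_n|=\varphi(n)+O(e^{-\eta n})\to0$, so $\{\tilde I_n\}$ satisfies the hypothesis of Theorem~\ref{thm:seqI}.

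The same counting gives $\|\pi_*\mu_{\tilde I_n}-\mu_{I_n}\|=O(e^{-\eta n})$ in total variation: on the common good part $I_n\setminus\mathcal N$ the two measures differ only through the normalizations $1/|\tilde I_n|$ and $1/|I_n|$, contributing $O(e^{-\eta n})$, while the total mass sitting over $\mathcal N$ is $O(e^{h'n}/|I_n|)=O(e^{-\eta n})$ on each side. Hence, for the bounded function $f$,
$$\Bigl|\int f\,d\mu_{I_n}-\int(f\circ\pi)\,d\mu_{\tilde I_n}\Bigr|\;\le\;|f|_\infty\,\|\pi_*\mu_{\tilde I_n}-\mu_{I_n}\|\;=\;C(f)\,O(e^{-\eta n}).$$
On the other hand, Theorem~\ref{thm:seqI} applied to $\{\tilde I_n\}$ and $f\circ\pi$, together with $\int(f\circ\pi)\,d\tilde m=\int f\,d(\pi_*\tilde m)=\int f\,dm$, gives
$$\Bigl|\int f\,dm-\int(f\circ\pi)\,d\mu_{\tilde I_n}\Bigr|\;\le\;c\,\|f\circ\pi\|_\theta\Bigl(\theta^{n/4}+2\sqrt2\,\sqrt{\tilde\varphi(n)}\,\Bigr).$$
Adding the two displays, using $\sqrt{\tilde\varphi(n)}\le\sqrt{\varphi(n)}+O(e^{-\eta n/2})$, absorbing every exponentially small term and the constant $c$ into $C(f)$, and relabelling $\theta^{1/4}$ as $\theta$, yields the first inequality of Theorem~\ref{thm:RA}. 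For the second assertion, take $I_n={\rm Fix}_n(T)$: then $|{\rm Fix}_n(T)|=\tr(A^n)+O(e^{h'n})=\lambda_1^n\bigl(1+O(e^{-\delta_0n})\bigr)$, so $\varphi(n)=O(e^{-\delta_0n})$, and feeding this into the first inequality produces a bound $C(f)\bigl(\theta^n+O(e^{-\delta_0n/2})\bigr)\le C(f)e^{-\delta n}$ with $\delta=\min\{-\log\theta,\,\delta_0/2\}$.

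The step I expect to be the main obstacle --- and the one the paper folds into ``standard arguments'' --- is property (ii): that the periodic points of $T$ lying in the Markov boundary $\mathcal N$ grow at a strictly smaller exponential rate. This is Bowen's refined analysis of Markov partitions (the same mechanism behind the rationality of the Artin--Mazur zeta function of Axiom~A systems): one must choose the partition so that each piece $T^k(\partial R_i)$ is coded by a finite union of proper subshifts of $(\Sigma_A^{(+)},\sigma)$, each of topological entropy strictly below $h(T)$. Granting this, all the discrepancies above are genuinely exponentially small rather than merely $o(1)$, which is exactly what lets the clean form of the bound survive the transfer; the reading of $\sqrt{\varphi(n)}$ when $\varphi(n)$ is slightly negative (in which regime $\varphi(n)=O(1/n)$) is inherited unchanged from Theorem~\ref{thm:seqI} and causes no difficulty.
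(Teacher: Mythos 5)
Your proposal is correct and is essentially the argument the paper itself intends: the paper skips the proof as the ``standard'' transfer through a Markov coding (deferring to \cite{Ka14}), and your write-up carries out exactly that transfer --- lifting $f$ to a $d_\theta$-Lipschitz function, lifting $I_n$ to $\tilde I_n\subset{\rm Fix}_n(\sigma)$, applying Theorem~\ref{thm:seqI} upstairs, and bookkeeping the discrepancy coming from the partition boundary. The one genuinely delicate input --- that periodic points of $T$ lying over the boundary set grow at a strictly smaller exponential rate, which follows from Bowen's analysis (the boundary pieces are proper closed invariant sets of entropy strictly less than $h(T)$ by uniqueness and full support of the measure of maximal entropy, and expansiveness bounds periodic-point growth by entropy) --- is precisely the point you flag, so nothing essential is missing.
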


We skip the proof and refer to \cite{Ka14} for more details about using the standard arguments. We note that defining suitable norms on $J$ or $\Omega$ one can make the dependence of $C(f)$ to $f$ precise. Equidistribution of closed orbits of expanding maps and hyperbolic diffeomorphisms were obtained by M.~Misiurewicz in \cite{Mi70} and by R.~Bowen in \cite{Bow71, B1}, respectively. Our results in Theorem~\ref{thm:RA} generalize and improve these results with exponential error terms.

We can also consider a subset ${\rm Fix}'_n$ of ${\rm Fix}_n$ consisting of primitive periodic orbits, that is, orbits with the least period $n$. When $T:\T^d \to \T^d$ is a linear hyperbolic automorphism of the $d$-torus, the equidistribution of ${\rm Fix}'_n$ was obtained in \cite{DI95}. From \cite[Proposition~2.3]{DI95} it follows that ${\rm Fix}'_n \sim \frac{e^{h(T) n}}{n}$, which implies that $h(T)-\frac{\log |{\rm Fix}'_n|}n = O(\frac{\log n}{n}).$ Thus, Theorem~\ref{thm:RA} in this special case gives the equidistribution of ${\rm Fix}'_n$ with the error term $O(\sqrt{\frac{\log n}{n}})$ improving \cite[Proposition~2.4]{DI95}.

\section{Proof of Theorem~\ref{thm:main}}

In this section we prove Theorem~\ref{thm:main}. We first note that the deduction of the second half of Theorem~\ref{thm:main} from \eqref{eqn:main} is standard as we sketch it now. It follows from \cite{B2, PP90} that if $f \in \mathcal F_\theta$ in $\Sigma_A$ then it is cohomologous to $f' \in \mathcal F_{\theta^{1/2}}$ with $f'(x)=f'(y)$ for all $x,y \in \Sigma_A$ satisfying $x_i=y_i$ for all $i \ge 0$. More precisely, there exist $f',u \in \mathcal F_{\theta^{1/2}}$ and a constant $C>0$ independent of $f$ such that $\|f'\|_{\theta/2} \le C \|f\|_\theta$ and $f+u \circ \sigma-u=f'.$ Then, $f'$ can be considered as a function in $\mathcal F_{\theta/2}^+$. Also, any invariant measure on $\Sigma_A$ can be considered as an invariant measure on $\Sigma_A^+$. Thus, using $\int f d\mu-\int f\, dm=\int f' d\mu-\int f' \,dm$ and \eqref{thm:main} we obtain the second half of the theorem. Thus, it suffices to prove Theorem~\ref{thm:main} for one-sided subshift.

We recall that $\xi_0^{N-1}=\bigvee_{i=0}^{N-1} \sigma^{-i} \xi$ where $N$ is a natural number or $N=\infty$. We first state properties of the measure $m$ of maximal entropy, known as Parry measure \cite{Parry}. This helps us to study the information function $I_m(\xi | \xi_1^{\infty})$ and see that $\int I_m(\xi | \xi_1^{\infty}) d\mu = h_m(\sigma)$ for any invariant measure $\mu$. We then use Pinsker inequality to relate the difference $ I_m(\xi | \xi_1^{\infty})- I_\mu(\xi | \xi_1^N)$ to the difference of entropies of partitions. What remains to do is to relate the difference of information functions to $\int f dm - \int f d\mu$ and this is done by constructing the sequence of functions $f_n=\mathcal L^n f $ using the transfer operator $\mathcal L$ for subshifts of finite type.

Let $A$ be an $s \times s$ irreducible and aperiodic transition matrix and $\lambda>0$ its largest eigenvalue. It follows from Perron-Frobenius theory cf. \cite[\S~0.9]{Walter}, that there are strictly positive left and right eigenvectors $(u_0,u_1, \dots,u_{s-1})$ and $(v_0,v_1,\dots,v_{s-1})$ respectively with $\sum_{i=0}^{s-1} u_i v_i=1$. We set $p_i=u_i v_i$ and $p_{ij}=a_{ij} v_j /\lambda v_i$. Then the Markov measure $  m$ given by the probability vector ${\bf p}=(p_0,p_1,\dots,p_{s-1})$ and the stochastic matrix $(p_{ij})$ is the unique measure of maximal entropy  \cite[Theorem~8.10]{Walter}. It is easy to see that for any admissible $(i_0,i_1,\dots,i_k)$, the $(k+1)$-cylinder set $C(i_0,i_1,\dots,i_k):=\{  x \in \Sigma_A^+ : x_0=i_0,\dots,x_k=i_k\}=\bigcap_{n=0}^k \sigma^{-n} C_{i_n}$ satisfies
\begin{equation}\label{eqn:volC}
  m(C(i_\ell,\dots,i_{\ell+k}))=\frac{u_{i_\ell} v_{i_{\ell+k}}}{\lambda^k}.
\end{equation}
For any partition $\zeta$ of $\Sigma_A^+$, let $[ x]_{\zeta}:=\bigcap_{  x \in B \in \zeta} B$ denote the atom of $\zeta$ containing $  x$ and $  m_{  x}^{\zeta}$ denote the conditional measure with respect to $\zeta$ supported on $[  x]_\zeta.$ For more information on conditional measures we refer to \cite[\S~5]{EW}. It follows from \eqref{eqn:volC} that
\begin{equation}\label{eqn:conditional}
  m_{  x}^{\xi_1^\infty} ([  x]_{\xi_0^{\infty}})= \lim_{N \to \infty}\frac{   m([  x]_{\bigvee_{i=0}^{N-1} \sigma^{-i}\xi})}{   m ([  x]_{\bigvee_{i=1}^{N-1} \sigma^{-i}\xi})}=\frac{u_{x_0}}{\lambda u_{x_1}}.
\end{equation}  
Thus, $ m_{  x}^{\xi_1^{\infty}} ([  x]_{\xi_0^{\infty}})$ is defined everywhere and for any $x \in \Sigma_A^+$ the information function $I_m$ satisfies
$$I_{  m}(\xi | \xi_1^{\infty})(  x)=-\log   m_{  x}^{\xi_1^{\infty}} ([x]_\xi)=\log \lambda+g(\sigma   x)-g(x),$$
where $g(  y)=\log u_{ y_0}.$ So, we immediately get

\begin{lem}\label{lem:iota}
For any $\sigma$-invariant probability measure $\mu$ on $\Sigma_A^+$, we have
$$\int I_{  m}(\xi | \xi_1^{\infty}) d\mu=h_{  m}(\sigma) =\log \lambda.$$
\end{lem}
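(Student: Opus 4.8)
The plan is simply to integrate, against $\mu$, the pointwise identity for the information function that has just been derived. Recall from the display immediately preceding the lemma that for every $x \in \Sigma_A^+$
\[
I_{m}(\xi \mid \xi_1^{\infty})(x)=\log\lambda+g(\sigma x)-g(x),\qquad g(y)=\log u_{y_0}.
\]
Because the left Perron--Frobenius eigenvector $(u_0,\dots,u_{s-1})$ has strictly positive entries and there are only finitely many symbols, $g$ is a bounded continuous function on $\Sigma_A^+$; hence both $g$ and $g\circ\sigma$ are $\mu$-integrable for any probability measure $\mu$. Integrating the identity above against a $\sigma$-invariant $\mu$ and using $\int g\circ\sigma\, d\mu=\int g\, d\mu$ (this is exactly $\sigma$-invariance applied to $g$), the coboundary term cancels and one is left with $\int I_{m}(\xi\mid\xi_1^{\infty})\, d\mu=\log\lambda$.

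It then remains only to identify $\log\lambda$ with $h_m(\sigma)$. For this I would specialize the computation just made to $\mu=m$: since $\xi$ is a generating partition and $m$ is $\sigma$-invariant, the Kolmogorov--Sinai entropy is given by the Rokhlin formula $h_m(\sigma)=h_m(\sigma,\xi)=H_{m}\!\left(\xi\mid\bigvee_{i\ge 1}\sigma^{-i}\xi\right)=\int I_{m}(\xi\mid\xi_1^{\infty})\, dm$, and the previous paragraph (with $\mu=m$) evaluates this to $\log\lambda$. Alternatively one may simply invoke the fact, recorded in \cite[Theorem~8.10]{Walter}, that $h(\sigma)=\log\lambda$ for an irreducible subshift of finite type and that the Parry measure $m$ attains it.

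There is no real obstacle here; the only points deserving a word of care are that $I_{m}(\xi\mid\xi_1^{\infty})$ is genuinely defined at \emph{every} point — which is guaranteed by the explicit formula \eqref{eqn:conditional} for the conditional measure, so that there is no null-set ambiguity when integrating against a measure $\mu$ that need not be equivalent to $m$ — and that $g$ is bounded, so that subtracting the coboundary before integrating is legitimate. Both facts are immediate from the Perron--Frobenius setup already in place.
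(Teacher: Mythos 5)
Your proof is correct and follows essentially the same route as the paper: the paper derives the pointwise coboundary identity $I_m(\xi\mid\xi_1^{\infty})(x)=\log\lambda+g(\sigma x)-g(x)$ from the explicit formula \eqref{eqn:conditional} and then obtains the lemma "immediately," i.e.\ by integrating against the invariant measure $\mu$ exactly as you do, with $h_m(\sigma)=\log\lambda$ supplied by the Parry-measure facts from \cite[Theorem~8.10]{Walter}. Your added remarks on the everywhere-definedness of the conditional measures and the boundedness of $g$ are just a more explicit justification of the same steps.
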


We now state Pinsker inequality. Consider the $n$-dimensional simplex $\Delta_n$ of probability vectors $ q=(q_1,q_2,\dots,q_n)$. For a given $p \in \Delta_n$ with strictly positive entries we define the function
$$\phi_p: \Delta_n \to \R \text{ by } \phi_p(  q)=-\sum_{i=1}^n q_i \log \frac{p_i}{q_i},$$
with the convention $0 \log \frac{p_i}{0} = 0.$ Fix the norm $\|  q\|=\sum_i |q_i|$ on $\R^n$. We have \cite[Lemma 12.6.1]{CT}

 \begin{lem}[Pinsker Inequality]\label{lem:pinsker}
$\phi_p$ is nonnegative and has a unique $0$ at $  p$. Moreover, for any $  q\in \Delta_n$ we have $$\|  q-  p\| \le \sqrt{2\phi_p(  q)}.$$
\end{lem}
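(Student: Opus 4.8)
The plan is to prove both assertions by reducing to the binary case via a partitioning argument, which is the classical route. First I would establish nonnegativity and the characterization of the zero: for $q \in \Delta_n$ with $q_i \geq 0$ summing to $1$, write $\phi_p(q) = \sum_i q_i \log(q_i/p_i)$, the relative entropy $D(q\|p)$. Applying Jensen's inequality to the strictly concave function $\log$ and the probability weights $q_i$ (restricted to the support of $q$) gives $-\phi_p(q) = \sum_{i: q_i>0} q_i \log(p_i/q_i) \leq \log\left(\sum_{i: q_i > 0} q_i \cdot \frac{p_i}{q_i}\right) = \log\left(\sum_{i:q_i>0} p_i\right) \leq \log 1 = 0$, so $\phi_p(q) \geq 0$. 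Strict concavity of $\log$ forces equality in Jensen exactly when $p_i/q_i$ is constant on the support of $q$, and the second inequality is an equality only when that support is all of $\{1,\dots,n\}$; together these give $q = p$, so $\phi_p$ vanishes only at $p$.

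For the quantitative bound $\|q - p\| \leq \sqrt{2\phi_p(q)}$, I would first prove the two-point case. For $a, b \in [0,1]$, set $\psi(a,b) = a\log\frac{a}{b} + (1-a)\log\frac{1-a}{1-b} - 2(a-b)^2$ and show $\psi(a,b) \geq 0$: fixing $a$, compute $\frac{\partial \psi}{\partial b} = -\frac{a}{b} + \frac{1-a}{1-b} - 4(b-a) = \frac{b-a}{b(1-b)} - 4(b-a)$, which has the sign of $(b-a)\bigl(\frac{1}{b(1-b)} - 4\bigr)$; since $b(1-b) \leq \tfrac14$, the bracket is nonnegative, so $\psi(a,\cdot)$ decreases for $b < a$ and increases for $b > a$, attaining its minimum value $\psi(a,a) = 0$ at $b = a$. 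Hence $D(\text{Bern}(a)\|\text{Bern}(b)) \geq 2(a-b)^2 = \tfrac12 \|(a,1-a) - (b,1-b)\|^2$ in our $\ell^1$ norm.

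To pass from two points to $n$, let $E = \{i : q_i \geq p_i\}$ and consider the coarsened binary partition into $E$ and its complement; set $a = \sum_{i \in E} q_i$, $b = \sum_{i \in E} p_i$. The data-processing (grouping) property of relative entropy — which follows from the log-sum inequality $\sum_j x_j \log \frac{x_j}{y_j} \geq \bigl(\sum_j x_j\bigr)\log\frac{\sum_j x_j}{\sum_j y_j}$ applied within $E$ and within its complement — gives $\phi_p(q) = D(q\|p) \geq D(\text{Bern}(a)\|\text{Bern}(b))$. Meanwhile, by the definition of $E$, $\|q - p\| = \sum_{i \in E}(q_i - p_i) + \sum_{i \notin E}(p_i - q_i) = (a - b) + (a - b) = 2(a-b)$, which is exactly the $\ell^1$ distance of the two Bernoulli vectors. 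Combining with the binary estimate yields $\phi_p(q) \geq \tfrac12 \|q-p\|^2$, i.e. $\|q - p\| \leq \sqrt{2\phi_p(q)}$.

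The main obstacle is the binary inequality $\psi(a,b) \geq 0$: it is the only genuinely analytic point, and one must be slightly careful at the boundary $b \in \{0,1\}$ (where $\phi_p$ is $+\infty$ unless $a$ matches, consistent with the convention $0\log\frac{p_i}{0}=0$ and $a\log\frac{a}{0} = +\infty$ for $a>0$) and with the degenerate cases $E = \emptyset$ or $E = \{1,\dots,n\}$, which force $q = p$ and make both sides zero. Everything else is the standard log-sum/grouping bookkeeping. Since the statement is quoted from \cite[Lemma 12.6.1]{CT}, one may alternatively just cite it, but the argument above is short enough to include.
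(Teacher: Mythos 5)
Your proposal is correct. The paper does not prove this lemma at all --- it simply quotes it from \cite[Lemma 12.6.1]{CT} --- and your argument (nonnegativity and the equality case via Jensen, the binary case $a\log\frac{a}{b}+(1-a)\log\frac{1-a}{1-b}\ge 2(a-b)^2$ by the derivative computation, then reduction to two points through the set $E=\{i: q_i\ge p_i\}$ and the log-sum inequality, noting $\|q-p\|=2(a-b)$) is exactly the classical proof given in that reference, with the boundary and degenerate cases handled correctly.
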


Let $  p(x),  q(x) \in \Delta_s$ be given by $p_i=p_i(  x)=  m_{  x}^{\xi_1^{\infty}}(C_i)$ and $q_i=q_i(  x)=\mu_{  x}^{\xi_1^N}(C_i)$. Then,
\begin{multline*}
\int (I_m(\xi | \xi_1^{\infty})(  y)-I_\mu(\xi |\xi_1^N)(  y)) d \mu_{  x}^{\xi_1^N}(  y)\\
=- \sum_{i=1}^s \mu_{  x}^{\xi_1^N}(C_i) \log \frac{  m_{  x}^{\xi_1^{\infty}}(C_i)}{\mu_{  x}^{\xi_1^N}(C_i)}=\phi_{p(x)}(q(  x)).
\end{multline*}
It is easy to see that $p_i=  m_{  x}^{\xi_1^{\infty}}(C_i)=0$ for some $i$ if and only if $C_i \cap [x]_{\xi_1^\infty}=\emptyset$ if and only if $C_i \cap [x]_{\xi_1^N}=\emptyset$ for any $N\in \N$. So, we must have $\mu_{  x}^{\xi_1^N}(C_i)=0$ in which case we simply drop the $i$-th term in the definition of $\phi_p.$ Now, applying Lemma~\ref{lem:iota} together with the fact $\int \int I_\mu(\xi | \xi_1^N) d \mu_{  x}^{\xi_1^N} d\mu(x)=H_\mu(\xi | \xi_1^N)$ we obtain
\begin{lem}\label{lem:intphi} For any invariant probability measure $\mu$ on $\Sigma_A^+$, we have
$$\int \phi_{p(x)}(q(x)) d\mu(x)=h_m(\sigma)-H_\mu(\xi | \xi_1^N).$$
\end{lem}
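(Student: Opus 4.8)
The plan is to integrate, with respect to $\mu$, the identity
$$\phi_{p(x)}(q(x)) = \int \left( I_m(\xi | \xi_1^{\infty})(y) - I_\mu(\xi | \xi_1^N)(y) \right) d\mu_x^{\xi_1^N}(y)$$
established just above, and to evaluate the two resulting double integrals one at a time. Splitting the integrand by linearity,
$$\int \phi_{p(x)}(q(x))\, d\mu(x) = \int \int I_m(\xi | \xi_1^{\infty})(y)\, d\mu_x^{\xi_1^N}(y)\, d\mu(x) - \int \int I_\mu(\xi | \xi_1^N)(y)\, d\mu_x^{\xi_1^N}(y)\, d\mu(x).$$

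For the second term I would simply invoke the definition of conditional information, which gives $\int \int I_\mu(\xi | \xi_1^N)\, d\mu_x^{\xi_1^N}\, d\mu(x) = H_\mu(\xi | \xi_1^N)$; this is exactly the fact recalled immediately before the statement, and it is finite since $H_\mu(\xi | \xi_1^N) \le H_\mu(\xi) \le \log s$. For the first term the point is that, by \eqref{eqn:conditional}, $I_m(\xi | \xi_1^{\infty})$ is defined everywhere and equals $\log \lambda + g(\sigma y) - g(y)$ with $g(y) = \log u_{y_0}$ bounded, so it is a bounded (hence $\mu$-integrable) function of the single variable $y$, not depending on $x$. Since the family $\{\mu_x^{\xi_1^N}\}_x$ disintegrates $\mu$ along $\xi_1^N$, the tower property collapses the inner integral and yields $\int \int I_m(\xi | \xi_1^{\infty})(y)\, d\mu_x^{\xi_1^N}(y)\, d\mu(x) = \int I_m(\xi | \xi_1^{\infty})\, d\mu = h_m(\sigma)$, the last equality being Lemma~\ref{lem:iota}. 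Subtracting the two evaluations gives the claimed identity.

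Two small points need attention but essentially no work. First, the splitting into two separate integrals is legitimate because $\phi_{p(x)}(q(x)) \ge 0$ by Lemma~\ref{lem:pinsker}, and each of the two pieces is integrable by the boundedness remarks above, so Tonelli's theorem applies and no cancellation of infinities can occur. Second, the convention of omitting the $i$-th summand in $\phi_{p(x)}$ when $p_i(x) = 0$ is harmless, since it was already observed that in that case $q_i(x) = 0$ and the atom $C_i \cap [x]_{\xi_1^{\infty}}$ is empty, so the disintegration identity loses no $\mu$-mass. I do not anticipate a genuine obstacle: the lemma is essentially the combination of the disintegration formula for the Parry-measure information function, Lemma~\ref{lem:iota}, and the definition of $H_\mu(\xi | \xi_1^N)$.
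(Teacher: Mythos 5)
Your proof is correct and takes essentially the same route as the paper: integrate the displayed identity for $\phi_{p(x)}(q(x))$ against $\mu$, collapse the double integral of $I_m(\xi\mid\xi_1^{\infty})$ to $\int I_m(\xi\mid\xi_1^{\infty})\,d\mu = h_m(\sigma)$ via Lemma~\ref{lem:iota}, and identify the remaining term as $H_\mu(\xi\mid\xi_1^N)$ by the tower property. The integrability, Tonelli, and zero-mass remarks you add are routine details that the paper leaves implicit.
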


Now we are in a position to introduce the sequence $(f_n)_{n \ge 0}$ of functions using the transfer operator. Let $\mathcal L:L^1(\Sigma_A^+,\xi_0^{\infty},m) \to L^1(\Sigma_A^+,\xi_0^{\infty},m)$ denote the transfer operator given by 
$$\mathcal L f =\frac{d m_f \circ \sigma^{-1}}{d m} \text{ where } dm_f=f dm.$$
The following is classical (see e.g. \cite[Lemma~1.10 ]{B2} and \cite[Theorem 2.2]{PP90} ).

\begin{lem}\label{lem:fnplus1}
There exist constants $C>0$ and $\rho \in (0,1)$ such that for any Lipschitz function $g$ on $\Sigma_A^+$ with $\int g dm=0$ we have
$$\|\mathcal L^n g\|_\theta \le  C \rho^n \|g\|_\theta, \text{ for any } n \ge 0.$$
\end{lem}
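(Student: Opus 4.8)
The plan is to show that $\mathcal L$ acts quasi-compactly on $\mathcal F_\theta^+$ with a spectral gap above the eigenvalue $1$, and then to read the estimate off the spectral decomposition. Note first that, because $m$ is the ($\sigma$-invariant) Parry measure, $\mathcal L$ is already normalized: $\mathcal L 1 = 1$, $\mathcal L$ is a positive operator, and $\int \mathcal L f\,dm = \int f\,dm$ for all $f\in L^1$. From \eqref{eqn:conditional} one obtains the explicit kernel
\[
  (\mathcal L^n f)(x) = \sum_{\sigma^n y = x} w_n(y)\,f(y),\qquad w_n(y) = \frac{u_{y_0}}{\lambda^n u_{x_0}}\ \ (x=\sigma^n y),
\]
where $\sum_{\sigma^n y = x} w_n(y) = 1$ since $(u_i)$ is a left $\lambda$-eigenvector of $A$.

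\textbf{Lasota--Yorke inequality and quasi-compactness.} For $x,x'$ in a common $1$-cylinder, each $n$-preimage $y$ of $x$ has a unique parallel $n$-preimage $y'$ of $x'$ (same first $n$ symbols) with $d_\theta(y,y')=\theta^n d_\theta(x,x')$; moreover the weights are Lipschitz with $\sup_n |w_n|_\theta<\infty$. Combining these with $\sum_y w_n(y)=1$ gives, for all $f\in\mathcal F_\theta^+$ and $n\ge 1$,
\[
  |\mathcal L^n f|_\theta \le \theta^n |f|_\theta + D\,|f|_\infty,\qquad |\mathcal L^n f|_\infty \le |f|_\infty,
\]
with $D>0$ independent of $f$ and $n$, i.e. $\|\mathcal L^n f\|_\theta \le \theta^n\|f\|_\theta + (D+1)|f|_\infty$. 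Since $\|\cdot\|_\theta$-bounded sets are equicontinuous, hence relatively compact in $C(\Sigma_A^+)$ by Arzel\`a--Ascoli, the Ionescu-Tulcea--Marinescu (Hennion) theorem applies: $\mathcal L$ is bounded on $\mathcal F_\theta^+$, its essential spectral radius is $\le\theta<1$, and (from $\mathcal L 1 = 1$ and the $|\cdot|_\infty$-contraction) its spectral radius equals $1$.

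\textbf{The peripheral spectrum is $\{1\}$ and simple.} This is where the hypotheses on $A$ enter: $A(i,i)=1$ for all $i$ together with irreducibility force $A^{n_0}$ to be strictly positive for some $n_0$, so $(\Sigma_A^+,\sigma)$ is topologically mixing and $m$ is mixing. If $\mathcal L h = zh$ with $0\ne h\in\mathcal F_\theta^+$ and $|z|=1$, then positivity gives $|h|\le \mathcal L|h|$ pointwise; integrating against $m$ forces $\mathcal L|h| = |h|$, and then mixing (weak-$*$ convergence $\mathcal L^n|h|\to\int|h|\,dm$) together with the full support of $m$ forces $|h|$ to be a positive constant. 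The equality case of the triangle inequality in $|\mathcal L h| = \mathcal L|h|$ then shows that $h$ is a constant multiple of a Koopman eigenfunction of $\sigma$ with eigenvalue $z$, so weak mixing gives $z=1$ and $h$ constant. Hence $\mathrm{spec}_{\mathcal F_\theta^+}(\mathcal L)\subseteq\{1\}\cup\{|w|\le\rho_0\}$ for some $\rho_0<1$, with $1$ a simple eigenvalue.

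\textbf{Conclusion, and the main difficulty.} The Riesz projection $P$ onto the eigenspace of $1$ has range $\R\cdot 1$; since $\int\mathcal L f\,dm = \int f\,dm$, necessarily $(I-P)f = f-\big(\int f\,dm\big)1$, so $V:=\{g\in\mathcal F_\theta^+:\int g\,dm=0\}$ is a closed $\mathcal L$-invariant subspace on which $\mathcal L$ has spectral radius $\le\rho_0$. Fixing any $\rho\in(\rho_0,1)$, the spectral-radius formula yields $C:=\sup_{n\ge 0}\rho^{-n}\,\|\mathcal L^n|_V\|_{\mathcal F_\theta^+\to\mathcal F_\theta^+}<\infty$, which is exactly the asserted bound for Lipschitz $g$ with $\int g\,dm=0$. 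The steps other than the third are routine once the kernel is in hand; the crux is the identification of the peripheral spectrum, which is precisely where topological mixing (hence the irreducible--aperiodic hypothesis) is used. An alternative that avoids the general spectral machinery is a Birkhoff--Hilbert projective-metric argument: $\mathcal L^{n_0}$ maps a suitable cone of positive Lipschitz functions into itself with finite projective diameter, hence contracts the Hilbert metric, yielding the exponential decay directly.
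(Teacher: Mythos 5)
Your proof is correct, but note that the paper does not prove this lemma at all: it is invoked as a classical fact with citations to Bowen (\emph{Equilibrium states}, Lemma 1.10) and Parry--Pollicott (Theorem 2.2), i.e.\ to the Ruelle--Perron--Frobenius theorem for the normalized transfer operator of the Parry measure. What you have done is reconstruct a proof of that cited result, and your route is the standard spectral one: the explicit kernel $w_n(y)=u_{y_0}/(\lambda^n u_{x_0})$ with $\sum_{\sigma^n y=x}w_n(y)=1$ is exactly right (in fact, since $w_n$ depends only on $y_0$ and $x_0$, it is locally constant, so your Doeblin--Fortet/Lasota--Yorke inequality holds even with the cleaner bound $|\mathcal L^n f|_\theta\le\theta^n|f|_\theta$ on each $1$-cylinder), Hennion/Ionescu-Tulcea--Marinescu then gives essential spectral radius $\le\theta$, and your peripheral-spectrum argument correctly uses the paper's strong aperiodicity hypothesis ($A(i,i)=1$ plus irreducibility gives primitivity, hence a mixing Parry measure) and avoids circularity by using measure-theoretic mixing of $m$ rather than the decay being proved. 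Two small points you should tighten: the identification $(I-P)f=f-\bigl(\int f\,dm\bigr)1$ does not follow from $\int\mathcal L f\,dm=\int f\,dm$ alone; write $Pf=\ell(f)1$ and use $\mathcal L^n f\to \ell(f)1$ uniformly together with $\int\mathcal L^n f\,dm=\int f\,dm$ (or note that the dual eigenfunctional is an eigenmeasure of $\mathcal L^*$ and $\mathcal L^*m=m$) to conclude $\ell=\int\cdot\,dm$; and in the triangle-equality step state explicitly that strict convexity of the circle forces $h(y)=zh(\sigma y)$ for every preimage, so $h\circ\sigma=\bar z h$ and weak mixing finishes. The Birkhoff-cone alternative you mention is essentially Bowen's original route, so either way you are reproducing the content of the sources the paper cites rather than diverging from the paper's (nonexistent) argument.
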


We have

\begin{lem} \label{lem:nplus1nestim} 
For any $f \in \mathcal F_A^+$, any probability invariant measure $\mu$ on $\Sigma_A^+$, and $n,N \in \N$ we have
$$\left|\int f_{n+1}\,d\mu -\int f_n\, d\mu\right|\le \|f_n\|_\theta \left( \theta^{N+1}+\sqrt{2}\left(h_{  m}(\sigma)-H_\mu(\xi|\xi_1^N)\right)^{\frac12}\right),$$
where $f_{n}:=\mathcal L^n f=\mathcal L f_{n-1}.$
\end{lem}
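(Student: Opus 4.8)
The plan is to rewrite the difference $\int f_{n+1}\,d\mu-\int f_n\,d\mu$ as an integral of $\|p(x)-q(x)\|$ plus a geometrically small error term, and then to apply the Pinsker inequality (Lemma~\ref{lem:pinsker}) together with Lemma~\ref{lem:intphi}. Throughout, $f_n=\mathcal L^n f\in\mathcal F_\theta^+$ (the transfer operator preserves $\mathcal F_\theta^+$, cf.\ Lemma~\ref{lem:fnplus1}), and $p(x),q(x)\in\Delta_s$ are as defined before Lemma~\ref{lem:intphi}. \emph{Step 1: a conditional-expectation formula for $f_{n+1}$.} From $\mathcal L g=d(m_g\circ\sigma^{-1})/dm$ one gets $\int h\cdot(\mathcal L g)\,dm=\int (h\circ\sigma)\cdot g\,dm$ for every bounded measurable $h$. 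Since $m$ is $\sigma$-invariant and $\xi_1^\infty=\sigma^{-1}\xi_0^\infty$ with $\xi_0^\infty$ separating points (so that every $\xi_1^\infty$-measurable function factors through $\sigma$), this shows that $(\mathcal L g)\circ\sigma$ is the conditional expectation of $g$ given $\xi_1^\infty$, i.e.\ $(\mathcal L g)(\sigma x)=\int g\,dm_x^{\xi_1^\infty}$. By \eqref{eqn:conditional} the atom $[x]_{\xi_1^\infty}$ is the finite set $\{\,j\cdot\sigma x:=(j,x_1,x_2,\dots):A(j,x_1)=1\,\}$ and $m_x^{\xi_1^\infty}(C_j)=p_j(x)$, so $(\mathcal L f_n)(\sigma x)=\sum_j p_j(x)\,f_n(j\cdot\sigma x)$; using the $\sigma$-invariance of $\mu$,
$$\int f_{n+1}\,d\mu=\int(\mathcal L f_n)\circ\sigma\,d\mu=\int\sum_j p_j(x)\,f_n(j\cdot\sigma x)\,d\mu(x).$$

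\emph{Step 2: disintegration and splitting.} Disintegrating $\mu$ over $\xi_1^N$ gives $\int f_n\,d\mu=\int\sum_j q_j(x)\,a_j(x)\,d\mu(x)$, where $q_j(x)=\mu_x^{\xi_1^N}(C_j)$ and $a_j(x)$ is the $\mu_x^{\xi_1^N}$-average of $f_n$ over $C_j\cap[x]_{\xi_1^N}$ (terms with $q_j(x)=0$ are dropped; for the remaining $j$ one has $A(j,x_1)=1$, whence $j\cdot\sigma x$ is admissible and lies in $C_j\cap[x]_{\xi_1^N}$). Subtracting,
$$\int f_{n+1}\,d\mu-\int f_n\,d\mu=\int\Bigl(\textstyle\sum_j\bigl(p_j(x)-q_j(x)\bigr)f_n(j\cdot\sigma x)+\sum_j q_j(x)\bigl(f_n(j\cdot\sigma x)-a_j(x)\bigr)\Bigr)\,d\mu(x).$$
Since $C_j\cap[x]_{\xi_1^N}$ is a cylinder on which all coordinates $0,1,\dots,N$ are fixed, $|f_n(j\cdot\sigma x)-a_j(x)|\le|f_n|_\theta\,\theta^{N+1}$, and $\sum_j q_j(x)=1$; also $\bigl|\sum_j(p_j(x)-q_j(x))f_n(j\cdot\sigma x)\bigr|\le|f_n|_\infty\,\|p(x)-q(x)\|$ because $\sum_j(p_j(x)-q_j(x))=0$ is irrelevant and $|f_n(j\cdot\sigma x)|\le|f_n|_\infty$. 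Hence
$$\left|\int f_{n+1}\,d\mu-\int f_n\,d\mu\right|\le|f_n|_\theta\,\theta^{N+1}+|f_n|_\infty\int\|p(x)-q(x)\|\,d\mu(x).$$

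\emph{Step 3: Pinsker and conclusion.} By Lemma~\ref{lem:pinsker}, $\|p(x)-q(x)\|\le\sqrt{2\phi_{p(x)}(q(x))}$, so by Cauchy--Schwarz (with $\mu$ a probability measure) and Lemma~\ref{lem:intphi},
$$\int\|p(x)-q(x)\|\,d\mu(x)\le\Bigl(2\int\phi_{p(x)}(q(x))\,d\mu(x)\Bigr)^{1/2}=\sqrt 2\,\bigl(h_m(\sigma)-H_\mu(\xi\mid\xi_1^N)\bigr)^{1/2}.$$
Combining this with the previous display and bounding $|f_n|_\theta,|f_n|_\infty\le\|f_n\|_\theta$ gives the claim. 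The one delicate point is Step~1 --- identifying $(\mathcal L g)\circ\sigma$ with the $\xi_1^\infty$-conditional expectation and, concretely, with the finite fiber sum $\sum_j p_j(x)f_n(j\cdot\sigma x)$; this requires care about which coordinates are frozen and uses the explicit Parry-measure identity \eqref{eqn:conditional}. Once it is in place the remainder is the elementary splitting above plus the two inequalities already established.
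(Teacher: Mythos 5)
Your proposal is correct and follows essentially the same route as the paper: you identify $(\mathcal L f_n)\circ\sigma$ with $E_m(f_n\mid\xi_1^\infty)(x)=\sum_j p_j(x)f_n(j\cdot\sigma x)$, compare it with $E_\mu(f_n\mid\xi_1^N)$ using the $\theta^{N+1}$-closeness of points in $C_j\cap[x]_{\xi_1^N}$ to $j\cdot\sigma x$, and then apply Pinsker, Cauchy--Schwarz, and Lemma~\ref{lem:intphi}, exactly as in the paper's argument. No gaps.
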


\begin{proof}
It is easy to see that $(\mathcal L f) \circ \sigma = E_{  m} (f |  \xi_1^{\infty}).$ Hence, using $\sigma$-invariance of $\mu$ we have
\begin{equation}
\int f_{n+1}\,d\mu -\int f_n\, d\mu = \int E_{  m}(f_n|\xi_1^{\infty})d\mu- \int E_\mu(f_n|\xi_1^N) d\mu.
\end{equation}
Clearly $C_i \cap [x]_{\xi_1^{\infty}}=\{y^{(i)}=ix_1x_2\cdots\}$ or empty. In any case we have 
$$E_m(f_n|\xi_1^{\infty})(x)=\int f_n(y) \,dm_x^{\xi_1^{\infty}}(y)=\sum_{i \in \Lambda} f_n(y^{(i)}) m_x^{\xi_1^{\infty}}(C_i).$$
Also, for any $y \in C_i \cap [x]_{\xi_1^N}$ we have $d(y,y_i) \le \theta^{N+1}.$ Thus, for $\mu$-a.e.~$x$
\begin{multline*}
|E_\mu(f_n |\xi_1^N)(x) - \sum_{i \in \Lambda}f_n(y^{(i)})\mu_x^{\xi_1^N}(C_i)|\\
=| \sum_{i \in \Lambda}\int_{C_i} (f_n(y)-f_n(y^{(i)})\,d\mu_x^{\xi_1^N}(y)|\le \theta^{N+1}|f_n|_\theta.
\end{multline*}
Consequently, this gives
\begin{multline*}
| E_{  m}(f_n|\xi_1^\infty)(x)-  E_\mu(f_n|\xi_1^N)(x)|\\
\le \theta^{N+1}|f_n|_\theta + \sum_{i \in \Lambda}|f_n(y^{(i)})| |m_x^{\xi_1^\infty}(C_i)-\mu_x^{\xi_1^N}(C_i)|\\
\le  \theta^{N+1}|f_n|_\theta + |f_n|_\infty \|p(x)-q(x)\|.
\end{multline*}
Using Lemma~\ref{lem:pinsker} and Cauchy-Schwarz inequality we deduce  
\begin{align*}
|\int f_{n+1}\,d\mu -\int f_n\, d\mu| &\le \int ( \theta^{N+1}|f_n|_\theta + |f_n|_\infty \|p(x)-q(x)\|)\,d\mu\\
&\le  \theta^{N+1} |f_n|_\theta  + |f_n|_\infty\int \sqrt{2 \phi_{p(x)}(q(x))}\,d\mu(x)\\
&\le \theta^{N+1}|f_n|_\theta + |f_n|_\infty\sqrt{2 \int  \phi_{p(x)}(q(x))\,d\mu(x)}\\
&= \theta^{N+1}|f_n|_\theta + \sqrt{2}|f_n|_\infty\sqrt{h_m(\sigma) - H_\mu(\xi | \xi_1^N)}. 
\end{align*}
\end{proof}

We need one more lemma before we prove Theorem~\ref{thm:main}.

\begin{lem}\label{lem:ave} Let $(a_n)_{n \ge 0}$ be a decreasing sequence of nonnegative integers and set $A_n=\frac{a_0+a_1+\cdots+a_{n-1}}n$. Then, for any $n \in \N$ and $h \ge a_0$ we have 
$$ 2( h -A_n) \ge h-a_{\left\lfloor\frac{n}2\right\rfloor}.$$
\end{lem}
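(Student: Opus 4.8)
The statement is elementary: given a decreasing sequence of nonnegative integers $(a_n)$, its Cesàro average $A_n$, and any $h \ge a_0$, we want $2(h - A_n) \ge h - a_{\lfloor n/2 \rfloor}$. The plan is to split the sum defining $A_n$ at the midpoint $k := \lfloor n/2 \rfloor$ and use monotonicity on each half. First I would write $n A_n = (a_0 + \cdots + a_{k-1}) + (a_k + \cdots + a_{n-1})$. Since the sequence is decreasing, the first block of $k$ terms is bounded above by $k \cdot a_0 \le k \cdot h$, and each term in the second block is bounded above by $a_k$, so the second block is at most $(n-k) a_k$. Hence $n A_n \le k h + (n-k) a_k$, i.e. $A_n \le \tfrac{k}{n} h + \tfrac{n-k}{n} a_k$.

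From this I would rearrange: $h - A_n \ge h - \tfrac{k}{n} h - \tfrac{n-k}{n} a_k = \tfrac{n-k}{n}(h - a_k)$. So it remains to check that $2 \cdot \tfrac{n-k}{n} \ge 1$, i.e. that $n - k \ge n/2$, i.e. $k \le n/2$ — which is exactly the definition $k = \lfloor n/2 \rfloor$. Multiplying the inequality $h - A_n \ge \tfrac{n-k}{n}(h-a_k)$ by $2$ and using $2(n-k)/n \ge 1$ together with $h - a_k \ge 0$ (which follows from $h \ge a_0 \ge a_k$) gives $2(h - A_n) \ge h - a_k = h - a_{\lfloor n/2 \rfloor}$, as desired.

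\textbf{Main obstacle.} There is essentially no obstacle; the only mild subtlety is bookkeeping around the floor function and the edge cases (small $n$, or $n$ odd versus even), where one must make sure $n - \lfloor n/2 \rfloor = \lceil n/2 \rceil \ge n/2$ holds in all cases, and that the second block $a_k, \dots, a_{n-1}$ is nonempty exactly when $k < n$, which holds for $n \ge 1$. For $n$ even this is tight, and for $n$ odd the inequality $2(n-k)/n > 1$ is strict, giving some slack. The monotonicity hypothesis is used twice (to bound the first block by $a_0$ and the second by $a_k$), and the hypothesis $h \ge a_0$ is used both to bound the first block and to guarantee $h - a_k \ge 0$ so that weakening the coefficient from $2(n-k)/n$ down to $1$ is legitimate.
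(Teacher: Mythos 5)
Your proof is correct and follows essentially the same route as the paper: both split the sum at $k=\lfloor n/2\rfloor$ and use monotonicity to get $nA_n \le k\,a_0 + (n-k)\,a_k$ (you write $k\,h$ in place of $k\,a_0$, carrying general $h$ throughout where the paper first reduces to $h=a_0$). The remaining bookkeeping with $n-k=\lceil n/2\rceil \ge n/2$ and $h - a_k \ge 0$ is handled correctly.
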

\begin{proof} It suffices to prove the lemma for $h=a_0$ and in this case the conclusion follows from the inequality
$$
A_n \le \frac{1}{n} \left(\left\lfloor\frac{n}2\right\rfloor a_0+(n-\left\lfloor\frac{n}2\right\rfloor)a_{\left\lfloor\frac{n}2\right\rfloor} \right)\le\frac{1}2(a_0+a_{\left\lfloor\frac{n}2\right\rfloor}).
$$
\end{proof}

\begin{proof}[Proof of Theorem~\ref{thm:main}]
It suffices to prove Theorem~\ref{thm:main} for Lipschitz functions $f$ with $\int f dm=0.$ As before we set $f_n=\mathcal L^n f$ for $n \ge 0$. From Lemma~\ref{lem:fnplus1} we see that $\int f_n \,d\mu$ converges to $0=\int f \,d  m $ which gives
\begin{align*}
\left| \int f d\mu - \int f d  m\right|=\lim_{n \to \infty} \left|\int f d\mu - \int f_n d\mu\right| \le \sum_{n=0}^\infty \left| \int f_{n+1} d\mu - \int f_n d\mu\right|.
\end{align*}
Now, using the estimate from Lemma~\ref{lem:nplus1nestim} together with Lemma~\ref{lem:fnplus1} we conclude
\begin{align*}
\left| \int f d\mu - \int f d  m\right| &\le \sum_{n=0}^\infty \|f_n\|_\theta \left( \theta^{N+1}+\sqrt{2}\left(h_{  m}(\sigma)-H_\mu(\xi|\xi_1^N)\right)^{\frac12}\right)\\
& \le \frac{C}{1-\rho} \|f\|_\theta \left( \theta^{N+1}+\sqrt{2}\left(h_{  m}(\sigma)-H_\mu(\xi|\xi_1^N)\right)^{\frac12}\right).
\end{align*}

Now, we consider how to replace $H_\mu(\xi|\xi_1^N)$ by $\frac{1}{N}H_\mu(\xi_0^{N-1})$. We know that 
\begin{equation}\label{eqn:cond}
\frac{1}{N}H_\mu(\xi_0^{N-1})=\frac{1}{N}H_\mu(\xi_0^{N-1})=\frac1N \sum_{n=0}^{N-1} H_\mu(\xi|\xi_1^n).
\end{equation}
It follows from Lemma~\ref{lem:pinsker} and Lemma~\ref{lem:intphi} that $H_\mu(\xi|\xi_1^n) \le h_m(\sigma)$ for any $n \in \N$ and in particular we have $\frac{1}{N}H_\mu(\xi_0^{N-1}) \le h_m(\sigma).$ Thus, applying Lemma~\ref{lem:ave} for $h=h_m(\sigma)$ and $a_n= H_\mu(\xi|\xi_1^n)$ we get
$$ 2( h_m(\sigma) -\frac{1}{N}H_\mu(\xi_0^{N-1})) \ge h_m(\sigma)-H_\mu(\xi|\bigvee_{i=1}^{\left\lfloor\frac{N}2\right\rfloor}\sigma^{-i}\xi).$$
Hence, for any $N \in \N$
$$\left| \int f d\mu - \int f d  m\right| \le \frac{C}{1-\rho} \|f\|_\theta \left( \theta^{\frac{N}2}+2\sqrt{2}\left( h_m(\sigma) -\frac{1}{N}H_\mu(\xi_0^{N-1})\right)^{\frac12} \right),$$
which finishes the proof.
\end{proof}


\begin{thebibliography}{100}

\bibitem{AE15} Aka, M. and Einsiedler, M., \emph{Duke's Theorem for subcollections.} Ergodic Theory Dynam. Systems {\bf 36 (2)} (2016), 335--342.

\bibitem{B1}
Bowen, R.,
\emph{Markov partitions and Axiom A diffeomorphisms.}
Amer. J. Math, {\bf 92}, (1970), 725-747.

\bibitem{B2}
Bowen, R.,
\emph{Equilibrium states and the ergodic theory of Anosov diffeomorphisms.} 
Springer Lecture Notes in Mathematics, {\bf 470}, Springer, New York, (1975).

\bibitem{Bow71} Bowen, R. \emph{Periodic points and measures for Axiom A diffeomorphisms.} Trans. Amer. Math. Soc. 154 (1971), 377--397.



\bibitem{CT}
Cover, T.M. and Thomas, J.A.,
\emph{Elements of information theory.} 
Wiley Series in Telecommunications. John Wiley \& Sons Inc., New York (1991)

\bibitem{DI95} Degli Eposti, M. and Isola, S., \emph{Distribution of closed orbits for linear automorphisms of tori}, Nonlinearity {\bf 8}, (1995), 827--842.

\bibitem{EW}
Einsiedler, M. and Ward, T.:
\emph{Ergodic theory with a view towards number theory.} 
Graduate Texts in Mathematics, {\bf 259}, Springer-Verlag London, Ltd., London, (2011)


\bibitem{Ka14}
Kadyrov, S., \emph{Effective uniqueness of Parry measure and exceptional sets in ergodic theory.} Monatsh. Math., {\bf 178 (2)} (2015), 237--249.



\bibitem{Mi70} Misiurewicz, M. \emph{On expanding maps of compact manifolds and local homeomorphisms of a circle}. Bull. Acad. Polon. Sci. S$\acute{\rm e}$r. Sci. Math. Astronom. Phys. {\bf 18}, (1970), 725--732. 


\bibitem{Parry}
Parry, W.,
\emph{Intrinsic Markov chains.} 
Trans. Amer. Math. Soc. {\bf 112}, (1964), 55-66.

\bibitem{PP90}
Parry, W. and Pollicott, M.:
\emph{Zeta functions and the periodic orbit structure of hyperbolic dynamics.} 
Asterisque, {\bf 187-188}, (1990), 1--268.

\bibitem{PY98} Pollicott, M. and Yuri, M., \emph{Dynamical Systems and Ergodic Theory.} Cambridge University Press, 1998.

\bibitem{Polo}
Polo, F.,
\emph{Equidistribution in Chaotic Dynamical Systems.}
Thesis, Ohio State University, https://etd.ohiolink.edu (2011)

\bibitem{Ruh13}
R\"{u}hr, R.,
\emph{Effectivity of uniqueness of the maximal entropy measure on $p$-adic homogeneous spaces.}
Ergodic Theory Dynam. Systems {\bf 36 (6)} (2016), 1972--1988.



\bibitem{S1}
Sinai, Ya.G.,
\emph{Markov partitions and c-diffeomorphisms.}
Functional Analysis and Its Applications, {\bf 2}, (1968), 61-82.

\bibitem{S2}
Sinai, Ya.G.,
\emph{Construction of Markov partitions.}
Functional Analysis and Its Applications, {\bf 2}, (1968), 245-253.



\bibitem{Walter}
Walters, P.,
\emph{An introduction to ergodic theory.} 
Graduate Texts in Mathematics. 79. New York, Springer-Verlag (1982)

\end{thebibliography}
\end{document}